\newtheorem{theorem}{Theorem}[section]
\def\N{\mathbb{N}}
\newcommand{\calO}{\mathcal{O}}
\newcommand{\calB}{\mathcal{B}}
\newcommand{\calG}{{\mathcal  G}}
\newcommand{\calL}{\mathcal{L}}
\newcommand{\calN}{\mathcal{N}}
\newcommand{\bpi}{\mathbf{\pi}}
\newcommand{\dd}{{\rm d}}
\newcommand{\vf}{{\bf f}}
\newcommand{\bg}{{\bf g}}
\newcommand{\vu}{{\bf u}}
\newcommand{\vv}{{\bf v}}
\newcommand{\rd}{{\rm d}}
\newcommand{\bchi}{\boldsymbol \chi}
\newcommand{\balpha}{\boldsymbol \alpha}
\newcommand{\vV}{{\bf V}}
\newcommand{\vw}{{\bf w}}
\newcommand{\bsx}{{\boldsymbol{x}}}
\newcommand{\bsy}{{\boldsymbol{y}}}
\newcommand{\vsigma}{\boldsymbol{\sigma}}
\newcommand{\R}{\mathbb{R}}
\newcommand{\calS}{\mathcal{S}}
\numberwithin{equation}{section}
\newcommand{\veps}{{\boldsymbol{\varepsilon}}}
\numberwithin{theorem}{section}
\newcommand{\TheTitle}{Bayesian inference  calibration of the modulus of elasticity}
\newcommand{\TheAuthors}{Dick, Le Gia, Mustapha}
\title{{\TheTitle}\thanks{This work was supported by the Australian Research Council
grant DP220101811.}
}
\author{J. Dick, Q. T. Le Gia, K. Mustapha \thanks{School of Mathematics and Statistics, University of New South Wales, Sydney, Australia}
}
\begin{document}

\maketitle
\begin{abstract}
This work uses the Bayesian inference technique to infer the Young modulus from the stochastic linear elasticity equation. The Young modulus is modeled by a finite Karhunen Lo\'{e}ve expansion, while the solution to the linear elasticity equation is approximated by the finite element method. The high dimensional integral involving the posterior density and the quantity of interest is approximated by a higher-order quasi-Monte Carlo method.
\end{abstract}

\section{Introduction}

The modulus of elasticity is a fundamental material parameter that characterizes stiffness and is critical in structural and mechanical design. However, its estimation is often subject to uncertainty due to measurement noise, material variability, and limited data availability. Traditional methods typically provide point estimates without accounting for these uncertainties, which can lead to overconfident predictions and potential design risks.

Bayesian inference~\cite{Alexanderian_2021,BECK2018523,Dashti-Stuart2017,Sarah-Eberle-Blick-2024} offers a probabilistic framework for estimating the modulus of elasticity by combining observed data with prior knowledge. Instead of yielding a single value, it produces a full posterior distribution that captures the uncertainty in the estimate. This allows for more informed decision-making, particularly in applications involving heterogeneous materials, small sample sizes, or complex testing conditions. Moreover, Bayesian methods are inherently flexible, allowing for sequential updating as new data becomes available. This makes Bayesian inference a powerful and transparent tool for improving the reliability and interpretability of elasticity measurements in engineering and materials science.

The equation governing small elastic deformations of a body $\Omega$ in~$\R^2$ with polygonal boundary can be written as: the displacement vector field $\vu(\cdot, \bsy)$ satisfies  
\begin{equation}
 -\nabla \cdot \vsigma(\bsy;\vu(\bsx,\bsy)) = \vf(\bsx) \quad \text{for } \bsx \in \Omega, \label{eq:L1}
\end{equation}
subject to homogeneous boundary conditions,  $\vu(\cdot,\bsy) = {\bf 0}$ on $\partial \Omega$ (the boundary of $\Omega$), with $\bsy$ being parameter vectors describing randomness. The parametric Cauchy stress tensor $\vsigma(\bsy;\cdot) \in [L^2(\Omega)]^{2\times 2}$ is defined as 
\[\vsigma(\bsy;\vu(\cdot,\bsy)) = \lambda(\cdot,\bsy)\Big(\nabla\cdot \vu(\cdot,\bsy)\Big) I 
+ 2\mu(\cdot,\bsy) \veps(\vu(\cdot,\bsy))\quad{\rm on}\quad \Omega, \] 
where  the symmetric strain tensor
 $ \veps(\vu) := \frac{1}{2} (\nabla \vu + (\nabla \vu)^T).$
Here, $\vf$ is the body force per unit volume, and $I$ is the identity tensor.   The gradient ($\nabla$) and the divergence ($\nabla \cdot$) are understood to be with respect to the physical variable $\bsx \in \Omega$.  The inhomogeneous random parameters $\mu$ and $\lambda$ are represented  in terms of the  Young’s modulus $E$ as: 
\begin{equation}\label{eq:mulambda}
    \mu(\bsx,\bsy)=\frac{E(\bsx,\bsy)}{2(1+\nu)}\quad{\rm and}\quad \lambda(\bsx,\bsy)=\frac{E(\bsx,\bsy)\,\nu}{(1+\nu)(1-2\nu)},
\end{equation}
where the constant  $0<\nu<1/2$  is the Poisson ratio of the elastic material and $E$ is expressed in the following Karhunen Lo\'{e}ve expansions: 
\begin{equation}\label{KLexpansion}
 E(\bsx,\bsy) = E_0(\bsx) +\sum_{j=1}^\infty y_j \psi_j(\bsx),
\end{equation}
where $\bsy= (y_j)_{j\ge1}$ belongs to $U:=(-\frac{1}{2},\frac{1}{2})^\N$ consisting of a countable number of parameters $y_j$,  which are assumed to be i.i.d. uniformly distributed. The ${\psi_j}$ are $L^2(\Omega)$-orthogonal basis functions, which are assumed to belong to $L^\infty(\Omega)$ in the convergence analysis section. 

In this work, we will use the Bayesian inference technique to determine the conditional distribution of the solution to \eqref{eq:L1} given some data measurements. We consider the approximation of the conditional density with some quantity of interest applied to the solution using higher-order quasi-Monte Carlo (QMC) method and finite element methods.


\section{The forward problem}\label{sec:forward}

We introduce  the following vector function spaces and the associated norms. Let   $\vV:=[H^1_0(\Omega)]^2$, and the associated norm be   $\|\cdot \|_{\vV}.$  For $J\ge 1$, the norm on the vector Sobolev space ${{\bf H}^J}:=[H^J(\Omega)]^2$ is denoted by $\|\cdot\|_{{{\bf H}^J}}$. For the ${\bf L}^2(\Omega)$-norm, we use the notation $\|\cdot\|.$  Finally,  $\vV^*$ is denoted  the dual space of $\vV$ with respect to the ${\bf L}^2(\Omega)$  inner product, with norm denoted by $\|\cdot\|_{\vV^\ast}$.

The weak formulation for the forward problem is described as follows.
Given $\bsy \in U$, find $\vu(\cdot,\bsy) \in \vV$ satisfying
\begin{equation}\label{para weak}
 \calB(\bsy;\vu, \vv) = \ell(\vv), \quad \text{for all} \quad \vv \in \vV,
\end{equation}
where  the bilinear form $\calB(\bsy;\cdot,\cdot)$ and the linear functional $\ell(\cdot)$ are defined by
\begin{equation*}\label{eq: bilinear}
\calB(\bsy;\vu, \vv) := \int_\Omega [2\mu(\bsy)\, \veps(\vu):\veps(\vv)+\lambda(\bsy) \nabla \cdot \vu \nabla \cdot \vv] \,d\bsx~~{\rm and}~~
\ell(\vv) :=\int_\Omega \vf \cdot \vv \,d\bsx.
\end{equation*}
The colon operator is the inner product between tensors. For the well-posedness of the  elastic problem \eqref{eq:L1}, we assume throughout the paper that
$E_{\min} \le E\le E_{\max}  \text{ on } \Omega \times U,$
for some positive constants $E_{\min}$ and  $E_{\max}$. 
This assumption leads to 
\begin{equation*}
\frac{ E_{\min} }{2(1+\nu)} \le \mu \le \frac{ E_{\max}}{2(1+\nu)}~~{\rm and}~~\frac{\nu\, E_{\min}}{(1+\nu)(1-2\nu)} \le \lambda \le\frac{ \nu\,E_{\max}}{(1+\nu)(1-2\nu)}
\end{equation*}
on $\Omega \times U.$ Then, for every $f \in \vV^\ast$ and $\bsy \in U$, the parametric weak formulation problem \eqref{para weak} has a unique solution \cite[Theorem 2]{ClarkeEtAl2024}. 
For a practical implementation,  we truncate the Karhunen Lo\'{e}ve expansion of $E$ in \eqref{KLexpansion} by $E_s = E_0 + \sum_{j=1}^s y_j \psi_j,$
that is,  assuming that $y_j=0$ for $j>s.$ Then,  with $\bsy_s=(y_1,y_2,\cdots,y_s,0,0,\cdots),$ and by using \eqref{eq:mulambda}, the errors from truncating the Lam\'e parameters are 
\begin{gather*}
    \|\mu(\cdot,\bsy)-\mu(\cdot,\bsy_s)\|_{L^\infty(\Omega)}\le \frac{1}{2(1+\nu)}\sum_{j \ge s+1} \|\psi_j\|_{L^\infty(\Omega)}~~{\rm and}\\
 \|\lambda(\cdot,\bsy)-\lambda(\cdot,\bsy_s)\|_{L^\infty(\Omega)}\le \frac{1}{1-2\nu}\sum_{j \ge s+1} \|\psi_j\|_{L^\infty(\Omega)}\,.
\end{gather*}
For the convergence of the series truncation in \eqref{KLexpansion}, we order the functions $\|\psi_j\|_{L^\infty(\Omega)}$ in a decreasing order, that is, $\| \psi_j \|_{L^\infty(\Omega)} \ge
\| \psi_{j+1} \|_{L^\infty(\Omega)}$ for $j\ge 1.$ In addition,     we assume that   for some $0<p<1$
  \begin{equation}\label{ass A11}
 \sum_{j=1}^\infty \|\psi_j\|^p_{L^\infty(\Omega)} < \infty~~{\rm or}~~
 \sum_{j=s+1}^\infty \|\psi_j\|_{L^\infty(\Omega)} \le  C s^{1-1/p}.
\tag{A2}
\end{equation}
For the Galerkin finite element convergence analysis,  assume that
\begin{equation}\label{ass A2}
\begin{aligned} 
&E_0  \in W^{\theta,\infty}(\Omega),\quad
\sum_{j=1}^\infty \|\psi_j\|_{W^{\theta,\infty}(\Omega)} < \infty.
\end{aligned}           
\tag{A3}
\end{equation}

The weak formulation for the truncated problem is: for every  $\bsy_s \in U$, find $\tilde \vu(\cdot,\bsy_s) \in \vV$ satisfying
\begin{equation}\label{para weak truncated}
 \calB(\bsy_s;\tilde \vu, \vv) = \ell(\vv), \quad \text{for all} \quad \vv \in \vV.
\end{equation}
%
Following the proof of  \cite[Theorem 5]{ClarkeEtAl2024}, 
we obtain error results in the next theorem.  The   (generic) constant $C$  depends on  $\Omega$, $\nu,$  $E_{\min}$ and $E_{\max}$.
\begin{theorem}\label{convergence calL u-us}
Assume that \eqref{ass A11} is satisfied for some $0<p<1$, and  $\chi:\vV \to \R$ is a bounded  linear functional, ($|\chi(\vw)|\le \|\chi\|_{\vV^*}\|\vw\|_\vV$ for all $\vw \in \vV$). Then, for every $\vf \in \vV^*$, $\bsy\in U$, and $s \in \N$, the solution $\tilde \vu$ of the truncated parametric weak problem
\eqref{para weak truncated}
satisfies
\begin{equation*}
|\chi(\vu(\cdot, \bsy))
-\chi(\tilde \vu(\cdot, \bsy_{s}))| 
\le C\, s^{1-1/p}\,   \|\vf\|_{\vV^*} \|\chi\|_{\vV^*}~~{\rm for~every}~\bsy \in U\,.   
\end{equation*}
\end{theorem}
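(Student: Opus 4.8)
The plan is to exploit the linearity of $\chi$ and reduce the statement to an energy-norm estimate on the solution difference. Since $\chi$ is a bounded linear functional,
\[
|\chi(\vu(\cdot,\bsy)) - \chi(\tilde\vu(\cdot,\bsy_s))|
= |\chi\big(\vu(\cdot,\bsy) - \tilde\vu(\cdot,\bsy_s)\big)|
\le \|\chi\|_{\vV^*}\,\|\vu(\cdot,\bsy) - \tilde\vu(\cdot,\bsy_s)\|_\vV ,
\]
so it suffices to prove $\|\vu(\cdot,\bsy) - \tilde\vu(\cdot,\bsy_s)\|_\vV \le C\,s^{1-1/p}\,\|\vf\|_{\vV^*}$.

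To control the solution difference I would first derive the error equation. Both $\vu$ and $\tilde\vu$ produce $\ell(\vv)$ on the right-hand side, so subtracting \eqref{para weak truncated} from \eqref{para weak} and rearranging gives, for all $\vv\in\vV$,
\[
\calB(\bsy;\vu - \tilde\vu,\vv)
= \ell(\vv) - \calB(\bsy;\tilde\vu,\vv)
= \calB(\bsy_s;\tilde\vu,\vv) - \calB(\bsy;\tilde\vu,\vv),
\]
where the right-hand side depends only on the difference of the Lam\'e coefficients,
\[
\calB(\bsy_s;\tilde\vu,\vv) - \calB(\bsy;\tilde\vu,\vv)
= \int_\Omega \big[\, 2(\mu(\bsy_s)-\mu(\bsy))\,\veps(\tilde\vu):\veps(\vv)
+ (\lambda(\bsy_s)-\lambda(\bsy))\,\nabla\cdot\tilde\vu\,\nabla\cdot\vv \,\big]\,d\bsx .
\]
Taking $\vv = \vu - \tilde\vu$ and invoking the coercivity of $\calB(\bsy;\cdot,\cdot)$ on $\vV$ (uniform in $\bsy$, coming from Korn's inequality together with the lower bound $E\ge E_{\min}$, i.e.\ $\mu\ge E_{\min}/(2(1+\nu))$ and $\lambda\ge 0$; this is precisely the mechanism behind the well-posedness cited from \cite[Theorem 2]{ClarkeEtAl2024}), there is an $\alpha>0$ with
\[
\alpha\,\|\vu-\tilde\vu\|_\vV^2 \le \calB(\bsy;\vu-\tilde\vu,\vu-\tilde\vu)
\le C\big(\|\mu(\bsy)-\mu(\bsy_s)\|_{L^\infty(\Omega)} + \|\lambda(\bsy)-\lambda(\bsy_s)\|_{L^\infty(\Omega)}\big)\,\|\tilde\vu\|_\vV\,\|\vu-\tilde\vu\|_\vV,
\]
the last bound following from the error equation with $\vv=\vu-\tilde\vu$, Cauchy--Schwarz, and pulling the coefficient differences out in the $L^\infty(\Omega)$ norm. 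Dividing by $\|\vu-\tilde\vu\|_\vV$ yields $\|\vu-\tilde\vu\|_\vV \le C\big(\|\mu(\bsy)-\mu(\bsy_s)\|_{L^\infty(\Omega)} + \|\lambda(\bsy)-\lambda(\bsy_s)\|_{L^\infty(\Omega)}\big)\|\tilde\vu\|_\vV$.

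It then remains to chain three ingredients: the a priori bound $\|\tilde\vu\|_\vV \le \alpha^{-1}\|\vf\|_{\vV^*}$, obtained by testing the truncated problem against $\tilde\vu$ and using coercivity; the truncation estimates for $\mu$ and $\lambda$ recorded above, which give $\|\mu(\bsy)-\mu(\bsy_s)\|_{L^\infty(\Omega)} + \|\lambda(\bsy)-\lambda(\bsy_s)\|_{L^\infty(\Omega)} \le C\sum_{j\ge s+1}\|\psi_j\|_{L^\infty(\Omega)}$; and assumption \eqref{ass A11}, which supplies $\sum_{j\ge s+1}\|\psi_j\|_{L^\infty(\Omega)} \le C\,s^{1-1/p}$. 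Combining these gives $\|\vu-\tilde\vu\|_\vV \le C\,s^{1-1/p}\,\|\vf\|_{\vV^*}$, and substituting into the first display closes the argument. I expect the genuine load-bearing step to be the perturbation estimate for the bilinear form, where the decisive point is verifying that the constant $C$ is independent of $\bsy$ and $s$; the underlying uniform coercivity is a subtlety, but it is already packaged into the cited well-posedness result, so the main care is in tracking the $\bsy$-uniformity of all constants through the chain.
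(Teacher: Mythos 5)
Your proof is correct and follows essentially the same route as the paper, which does not spell out the argument itself but defers to the proof of \cite[Theorem 5]{ClarkeEtAl2024}: a Strang-type coefficient-perturbation estimate combining uniform coercivity (from $E \ge E_{\min}$, $\lambda \ge 0$, and Korn's inequality), the error equation $\calB(\bsy;\vu-\tilde\vu,\vv) = \calB(\bsy_s;\tilde\vu,\vv) - \calB(\bsy;\tilde\vu,\vv)$, the a priori bound $\|\tilde\vu\|_\vV \lesssim \|\vf\|_{\vV^*}$, the Lam\'e-coefficient truncation bounds recorded in Section~\ref{sec:forward}, and the tail estimate from \eqref{ass A11}. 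All constants you use are indeed uniform in $\bsy$ and $s$ because the bound $E_{\min}\le E\le E_{\max}$ holds on all of $\Omega\times U$ (and applies at $\bsy_s\in U$ as well), so the argument is complete as written.
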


Next,  we approximate the truncated solution $\tilde \vu$ using the Galerkin finite element solution of degree at most $r$ ($r\ge 1$). So, we introduce a family of regular triangulation (made of simplexes)  $\mathcal{T}_h$ of the domain $\overline{\Omega}$ and set $h=\max_{\rho\in \mathcal{T}_h}(h_\rho)$, where $h_{\rho}$ denotes the diameter of the mesh element $\rho$. Let $V_h \subset H^1_0(\Omega)$ denote the usual conforming finite element space of continuous, piecewise polynomial functions of degree at most $r$  on  
$\mathcal{T}_h$ that vanish on $\partial \Omega$. Let $\vV_h=[V_h]^d$ be the finite element vector space. We define the parametric finite element approximate solution  as: find $\tilde \vu_h(\cdot, \bsy_s) \in \vV_h$ such that 
\begin{equation}\label{FE solution}
 \calB(\bsy_s;\tilde \vu_h, \vv_h) = \ell(\vv_h), \quad \text{for all} \quad \vv_h \in \vV_h,\quad{\rm for~every}~~\bsy_s \in U.
\end{equation}
In the next theorem, we  discuss the error estimate from the finite element discretization.    For the proof, we refer to \cite[Theorem 6]{ClarkeEtAl2024}. We assume that the truncated continuous solution $\tilde \vu$ satisfies the following regularity property: for some $1\le \theta\le r,$
\begin{equation}\label{a priori H2}
  \|\tilde \vu(\cdot,\bsy_s)\|_{{\bf H}^{\theta+1}}
 \lesssim\,\|\vf\|_{{\bf H}^{\theta-1}},\quad{\rm  for~ every}~~ \bsy_s \in U \,.
\end{equation}
\begin{theorem}\label{Convergence theorem}
Assume that \eqref{ass A2} and \eqref{a priori H2} are satisfied. If $\chi:{\bf L}^2(\Omega) \to \R$ is a bounded  linear functional, 
then for every $\bsy_s  \in U,$ we have 
\[|\chi(\tilde \vu(\cdot, \bsy_s))-\chi(\tilde \vu_h(\cdot, \bsy_s))|\lesssim\, h^{\theta+1} \|\vf\|_{{\bf H}^{\theta-1}} \|\chi\|,\quad{\rm for}~~1\le \theta\le r.\]
The  constant $C$  depends on  $\Omega$, $\nu,$ $E_{\max}$ and $E_{\min}$, but not on $h$.
\end{theorem}
So far, we discussed the error from truncating the infinite series expansion in \eqref{eq:mulambda} and the error from approximating the truncated solution via the finite element method. The next task is to estimate the expected value of $\chi(\tilde \vu_h)$ using a higher-order QMC rule. 
\section{Higher order QMC method}\label{sec:QMC}
We approximate the $s$-dimensional integral $I_s(F)$ by an $N$-point QMC method $Q_{N,s}(F)$, which is an equal weight quadrature rule, i.e.
\begin{equation}\label{eq:IsF}
 I_s(F) \,:=\,
 \int_{[0,1]^s} F(\bsy) \,\rd\bsy \quad  \approx  \quad  Q_{N,s}(F) \; := \;
  \frac{1}{N} \sum_{n=0}^{N-1} F(\bsy_n)\;.
\end{equation}
The QMC points  $\{\bsy_0, \ldots,\bsy_{N-1} \}$ belong to $[0,1)^s$. We shall analyze, in particular, $Q_{N,s}$ being deterministic, {\emph interlaced high-order polynomial lattice} rules as introduced in \cite{Dick2008} and as considered for affine-parametric operator equations in \cite{DickKuoGiaNuynsSchwab2014}. To this end, to generate a polynomial lattice rule in base $b$ with $N$ points in $[0,1)^s$, we need a \emph{generating vector} of polynomials $\bg(x) = (g_1(x), \ldots, g_{s}(x)) \in [P_{m}({\mathbb Z}_{b})]^{s}$, where  $P_{m}({\mathbb Z}_{b})$ is the space of polynomials of degree less than $m$ in $x$ with coefficients taken from a finite field ${\mathbb Z}_{b}$ which, for convenience, we identify with the integers $\{0, 1, \ldots, b-1\}$.

For each integer $0\le n\le b^{m}-1$, we associate $n$ with the polynomial
$n(x) = \sum_{i=1}^{m} \eta_{i-1} x^{i-1}  \quad \in {\mathbb Z}_{b}[x]$,
where $(\eta_{m-1}, \ldots ,\eta_0)$ is the $b$-adic expansion of $n$, that is $n =\sum_{i=1}^{m} \eta_{i-1}\,b^{i-1}\,.$ We also need a map $v_{m}$ which maps elements in ${\mathbb Z}_{b}(x^{-1})$ to
the interval $[0,1)$, defined for any integer $w$ by
$v_{m} \left( \sum_{\ell=w}^\infty t_{\ell} x^{-\ell} \right) =\sum_{\ell=\max(1,w)}^{m} t_{\ell} b^{-\ell}.$

Let $P \in {\mathbb Z}_b[x]$ be an irreducible polynomial with degree $m$. The classical polynomial lattice rule $\calS_{P,b,m,s}(\bg)$ associated with $P$ and the generating vector $\bg$ is comprised of the quadrature points
\[\bsy_n =\left(v_{m} \Big( \frac{n(x)g_j(x)}{P(x)} \Big)\right)_{1\le j\le s} \in [0,1)^{s},\quad {\rm for}~~n = 0,\ldots, N - 1.\]

Classical polynomial lattice rules give almost first order of convergence for integrands of bounded variation. To obtain high-order of convergence, an interlacing procedure described as follows is needed. Following \cite{GodaDick2015}, the \emph{digit interlacing function}  with digit interlacing factor $\alpha \in \mathbb{N}$, $\mathscr{D}_\alpha: [0,1)^{\alpha}  \to  [0,1)$,   is defined  by
\[\mathscr{D}_\alpha(x_1,\ldots, x_{\alpha})= \sum_{i=1}^\infty \sum_{j=1}^\alpha
\xi_{j,i} b^{-j - (i-1) \alpha}\;,\]
where $x_j = \sum_{i\ge 1} \xi_{j,i}\, b^{-i}$ for $1 \le j \le \alpha$.
For vectors, we set $\mathscr{D}^s_\alpha: [0,1)^{\alpha s}  \to  [0,1)^s$ with 
$\mathscr{D}^s_\alpha(x_1,\ldots, x_{\alpha s}) =
(\mathscr{D}_\alpha(x_1,\ldots, x_\alpha),  \ldots,
\mathscr{D}_\alpha(x_{(s-1)\alpha +1},\ldots, x_{s \alpha})).$
Then, an interlaced polynomial lattice rule of
order $\alpha$ with $b^m$ points in $s$ dimensions
is a QMC rule using $\mathscr{D}_\alpha(\calS_{P,b,m,\alpha s}(\bg))$
as quadrature points, for some given modulus $P$ and generating vector $\bg$.

Next, we state an error bound for approximating the  integral ${\mathcal I}_{s}(F)$ by the QMC quadrature formula $Q_{s,N}(F)$ in \eqref{eq:IsF}. The proof relies on \cite[Theorem 3.1]{DickKuoGiaNuynsSchwab2014}.
\begin{theorem}\label{thm:quadrature}
Let $\bchi=(\chi_j)_{j\ge 1}$ be a sequence of positive numbers such that $\chi_j \ge \chi_{j+1}$ for all $j \in \mathbb{N}$ and $\sum_{j=1}^\infty \chi_j^p$ being finite for some $0<p<1$. Let $ \bchi_{s}=(\chi_j)_{1\le j\le s}$ and let $\alpha \,:=\, \lfloor 1/p \rfloor +1$. Assume that $F$ satisfies the following regularity properties: for any $\bsy\in [0,1)^s$,  
$\balpha \in \{0, 1, \ldots, \alpha\}^{s}$,  the following inequalities hold $|\partial^{\balpha}_\bsy {F}(\bsy)| \le c|\balpha|! 
\bchi_{s}^{\balpha},$ where the constant $c$ is independent of $\bsy$, $s$ and of $p$. Then one can construct an interlaced polynomial lattice rule of order $\alpha$ with $N$ points, using a fast component-by-component (CBC) algorithm,  with cost $\calO(\alpha\,s N(\log N+\alpha\,s))$   operations, respectively,  so that the following error bound holds
\[|{\mathcal I}_{s} (F)-Q_{s,N} (F)|\le C\,N^{-1/p}\,.\]
The  constant $C$ depends on $b$ and $p$, but is independent of $s$ and $m$. 
\end{theorem}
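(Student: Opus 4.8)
The plan is to recognize the hypothesis on $F$ as exactly the condition that places $F$ in a weighted Sobolev space of dominating mixed smoothness of order $\alpha$, and then to apply the interlaced polynomial lattice rule construction of \cite[Theorem 3.1]{DickKuoGiaNuynsSchwab2014}, with the weights chosen so that both the norm of $F$ and the worst-case quadrature error are controlled \emph{uniformly} in $s$. The final rate and the $s$-independence of the constant then follow from the $p$-summability of $(\chi_j)$ together with the choice $\alpha=\lfloor 1/p\rfloor+1$.

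First I would fix the function space. For the integer smoothness $\alpha$ and a family of weights $\gamma_{\mathfrak{u}}>0$ indexed by finite subsets $\mathfrak{u}\subseteq\{1,\ldots,s\}$, the natural norm is built from the mixed partial derivatives $\partial^{\balpha}_\bsy F$ of order at most $\alpha$ in each coordinate, each rescaled by $\gamma_{\mathfrak{u}}^{-1}$. The hypothesis $|\partial^{\balpha}_\bsy F(\bsy)|\le c\,|\balpha|!\,\bchi_s^{\balpha}$ is precisely the product-and-order bound needed to control this norm: inserting it into the definition of the norm, the contribution of a subset $\mathfrak{u}$ is governed by sums of $|\boldsymbol{\nu}|!\prod_{j\in\mathfrak{u}}\chi_j^{\nu_j}$ over multi-indices $\boldsymbol{\nu}$ supported on $\mathfrak{u}$ with entries in $\{1,\ldots,\alpha\}$. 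This motivates the \emph{smoothness-driven product and order dependent} (SPOD) choice
\[ \gamma_{\mathfrak{u}} \;=\; \sum_{\boldsymbol{\nu}_{\mathfrak{u}}\in\{1,\ldots,\alpha\}^{|\mathfrak{u}|}} |\boldsymbol{\nu}_{\mathfrak{u}}|!\,\prod_{j\in\mathfrak{u}}\big(2\,\chi_j^{\nu_j}\big), \]
for which $\|F\|$ is bounded by a fixed multiple of $c$, independently of $s$.

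Next I would invoke the component-by-component construction. The cited theorem guarantees that a fast CBC algorithm produces, in $\calO(\alpha\,s\,N(\log N+\alpha\,s))$ operations, a generating vector $\bg$ whose associated order-$\alpha$ interlaced polynomial lattice rule has worst-case error $e_{N,s,\alpha,\boldsymbol{\gamma}}$ bounded by a quantity of the form
\[ e_{N,s,\alpha,\boldsymbol{\gamma}} \;\le\; \Big(\sum_{\emptyset\neq\mathfrak{u}\subseteq\{1,\ldots,s\}} \gamma_{\mathfrak{u}}^{\lambda}\,C_{\alpha,b}^{|\mathfrak{u}|}\Big)^{1/\lambda}\,N^{-1/\lambda}, \]
valid for every $\lambda$ with $1/\alpha<\lambda\le 1$. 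Since $|\mathcal{I}_s(F)-Q_{s,N}(F)|\le \|F\|\,e_{N,s,\alpha,\boldsymbol{\gamma}}$, combining this with the norm bound and taking $\lambda=p$ yields the target rate $N^{-1/p}$. The choice $\lambda=p$ is admissible precisely because $\alpha=\lfloor 1/p\rfloor+1>1/p$ forces $1/\alpha<p\le 1$; this is where the interlacing order $\alpha$ must be chosen large enough to support convergence faster than first order.

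The hard part, and the only place where the summability assumption is genuinely used, is to show that the weighted sum $\sum_{\emptyset\neq\mathfrak{u}}\gamma_{\mathfrak{u}}^{p}\,C_{\alpha,b}^{|\mathfrak{u}|}$ is finite and bounded independently of $s$ and $m$. Here the factorial growth $|\boldsymbol{\nu}_{\mathfrak{u}}|!$ built into the SPOD weights must be balanced against the decay of $(\chi_j)$. I would control the factorial by a multinomial splitting, reorganize the subset sum as a product over coordinates, and reduce the estimate to a convergent expression governed by $\sum_{j=1}^\infty\chi_j^{p}<\infty$: the exponent $p$ is exactly small enough to make the terms $\chi_j^{p\nu_j}$ summable, while $\alpha>1/p$ ensures the discretization order reaches the rate $N^{-1/p}$. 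This uniform-in-$s$ estimate is what makes the final constant $C$ depend only on $b$ and $p$.
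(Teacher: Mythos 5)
Your proposal is correct and takes essentially the same approach as the paper: the paper's ``proof'' of this theorem is nothing more than an appeal to \cite[Theorem~3.1]{DickKuoGiaNuynsSchwab2014}, and your argument --- SPOD weights matched to the derivative bounds $|\partial^{\balpha}_\bsy F(\bsy)|\le c\,|\balpha|!\,\bchi_s^{\balpha}$, the fast CBC worst-case error bound valid for $1/\alpha<\lambda\le 1$, the choice $\lambda=p$ legitimized by $\alpha=\lfloor 1/p\rfloor+1>1/p$, and the uniform-in-$s$ finiteness of $\sum_{\mathfrak{u}}\gamma_{\mathfrak{u}}^{p}C_{\alpha,b}^{|\mathfrak{u}|}$ deduced from $\sum_{j}\chi_j^{p}<\infty$ --- is precisely the machinery inside that cited result. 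The only cosmetic deviation is your weight factor $2\chi_j^{\nu_j}$ in place of the reference's $2^{\delta(\nu_j,\alpha)}\chi_j^{\nu_j}$, which affects nothing in the argument.
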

\section{Bayesian inversion}\label{Bayesian}
We consider the following {\em inverse problem}:
given observational data $\delta$,
predict a ``most likely'' value of a
Quantity of Interest (QoI)
$\phi$, which is typically a continuously differentiable functional of the uncertain Young's modulus $E$. Let $\calG$ be the solution operator of equation \eqref{eq:L1}. That is, $\calG(E) = \vu.$

We assume given an observation functional $\calO(\cdot): \vV \rightarrow Y$ (these are the bounded linear functionals $\chi$ in the forward problem), which denotes a
{\em bounded linear observation operator}
on the space $\vV$ of observed system responses in $Y$.
We assume that there is a finite number $K$ of
sensors, and each sensor can measure the values of $2$ components of the data, so that $Y = \mathbb{R}^{2K}$.
Then $\calO\in \calL(\vV; Y) \simeq (\vV^*)^{2K}$.
We equip $Y$ with the Euclidean norm, denoted by $\|\cdot\|$.
For example, if $\calO(\cdot)$ is a
$K$-vector of observation functionals
$\calO(\cdot) = (o_k(\cdot))_{k=1}^{2K}$.
Assume the observation noise covariance is $\Gamma$, the covariance-weighted, least squares potential $\Phi_{\Gamma}: \vV \times Y \to \mathbb{R}$ is given by
\begin{equation}
\Phi_\Gamma(E;\delta) = \frac{1}{2} \|\delta - \calO(\calG(E))  \|^2_{\Gamma}
:= \frac{1}{2}
((\delta - \calO(\calG(E)))^\top \Gamma^{-1}
(\delta - \calO(\calG(E)))).
\end{equation}
The following is a variant of \cite[Theorem 3.4]{Dashti-Stuart2017}
\begin{theorem}\label{thm:Bayes}
Assume that the potential
$\Phi_\Gamma(E;\delta) = \frac{1}{2} \|\delta - \calO(\calG(E)) \|^2_{\Gamma}$ is,
for fixed data $\delta \in \R^{2K}$,
$\bpi_0$ measurable and that, for $\mathbb{Q}_0$-a.e.~data $\delta\in \R^{2K}$ there holds
$$
Z:= \int_U\! \exp\left( -\Phi_{\Gamma}(E;\delta) \right) \bpi_0(\dd u) > 0 \;.
$$
Then the conditional distribution of $\vu|\delta$ ($\vu$ given $\delta$) exists and is denoted by $\bpi^\delta$.
It is absolutely continuous with respect to
$\bpi_0$ and there holds
\[\frac{d\bpi^\delta}{d\bpi_0}(E) 
=
\frac{1}{Z}  \exp\left( -\Phi_{\Gamma}(E;\delta) \right) 
\;.\]
\end{theorem}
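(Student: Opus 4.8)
The plan is to recognize this as a finite-dimensional instance of the standard Bayesian well-posedness framework (Bayes' theorem for measures on function/parameter spaces), where the parameter is the Young's modulus $E$, or equivalently its coordinate sequence $\bsy \in U$, and the data $\delta \in \R^{2K}$ is finite-dimensional with a Gaussian noise model. The target statement is essentially an application of the Radon--Nikodym theorem combined with the Gaussian likelihood structure, so the proof should follow the classical argument of \cite{Dashti-Stuart2017} with only minor adaptations to the present notation.

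First I would fix the data $\delta \in \R^{2K}$ and set up the joint probability measure $\bpi_0(\dd u)\otimes \mathbb{Q}(\dd\delta)$ on the product space $U \times \R^{2K}$, where $\bpi_0$ is the prior (the uniform product measure on $U$ pushed to the law of $E$) and the conditional law of $\delta$ given $E$ is Gaussian $\mathcal{N}(\calO(\calG(E)),\Gamma)$. Since $\Gamma$ is a fixed symmetric positive-definite $2K\times 2K$ covariance, this conditional density with respect to Lebesgue measure on $\R^{2K}$ is proportional to $\exp(-\Phi_\Gamma(E;\delta))$, which is exactly where the least-squares potential enters. The measurability hypothesis on $\Phi_\Gamma(\cdot;\delta)$ guarantees that the map $E \mapsto \exp(-\Phi_\Gamma(E;\delta))$ is $\bpi_0$-measurable, so the joint measure is well-defined and the integral defining $Z$ makes sense.

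The core step is to invoke the disintegration (conditioning) theorem to produce the conditional distribution $\bpi^\delta$ of $\vu$ given $\delta$, and then to identify its density. Because $\calG$ is the solution operator, the law of $\vu=\calG(E)$ is the pushforward of the law of $E$, so it suffices to describe the conditional law in terms of $E$ and transport it through $\calG$. The positivity assumption $Z>0$ for $\mathbb{Q}_0$-a.e.\ $\delta$ is precisely what allows the normalisation: dividing by $Z$ yields a genuine probability density. Absolute continuity $\bpi^\delta \ll \bpi_0$ is then immediate because the Radon--Nikodym derivative $\frac{1}{Z}\exp(-\Phi_\Gamma(E;\delta))$ is a nonnegative $\bpi_0$-integrable function integrating to one. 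Verifying that this candidate density indeed yields the conditional measure amounts to checking, for every bounded measurable test functional, that integrating against $\frac{1}{Z}\exp(-\Phi_\Gamma)\,\bpi_0$ reproduces the correct conditional expectation; this is the standard characterisation of conditional expectation and follows from Fubini's theorem applied to the joint measure.

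The main obstacle is not analytic difficulty but the careful justification of the conditioning step in the infinite-dimensional parameter setting: one must ensure that the posterior is well-defined $\mathbb{Q}_0$-almost surely and that the almost-sure positivity of $Z$ is genuinely inherited from the hypotheses rather than assumed pointwise. Here the boundedness of the linear observation operator $\calO$ and the fact that $\Phi_\Gamma \ge 0$ (so $\exp(-\Phi_\Gamma)\le 1$ is $\bpi_0$-integrable with $Z \le 1$) do most of the work, since they prevent degeneracy and guarantee integrability without further moment conditions. Once these measurability and positivity points are secured, the identification of the density is a direct transcription of \cite[Theorem 3.4]{Dashti-Stuart2017}, and I would simply note the adaptation of their argument to the finite-dimensional Gaussian data $\delta \in \R^{2K}$ and the explicit least-squares potential $\Phi_\Gamma$ defined above.
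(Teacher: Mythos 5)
Your proposal is correct and takes essentially the same route as the paper: the paper gives no self-contained proof of Theorem~\ref{thm:Bayes}, presenting it as a variant of \cite[Theorem 3.4]{Dashti-Stuart2017}, and your argument --- the joint measure on $U \times \R^{2K}$ with Gaussian likelihood proportional to $\exp(-\Phi_\Gamma)$, the disintegration/conditioning step, and the verification of the normalized density $\frac{1}{Z}\exp(-\Phi_\Gamma(E;\delta))$ through the conditional-expectation characterization and Fubini --- is precisely the standard proof of that cited result, adapted to the finite-dimensional data $\delta \in \R^{2K}$. The points you flag (the posterior lives on the parameter $E$, equivalently $\bsy$, and is transported through $\calG$; the bound $\exp(-\Phi_\Gamma) \le 1$ gives $Z \le 1$ and integrability for free; positivity of $Z$ is what permits normalization) are exactly the adaptations needed, so nothing is missing.
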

Let
\[
\Theta(\bsy) = 
\exp\bigl(-\Phi_\Gamma(E;\delta)\bigr),
\quad
\Theta(\bsy_s) = 
\exp\bigl(-\Phi_\Gamma(E_s;\delta)\bigr).
\]
With the quantity of interest (QoI) $\phi:{\bf L}^2(\Omega) \to \R$, we associate the parametric map
\[\Psi(\bsy)
=
\Theta(\bsy)\phi(\vu(E))
=
\exp\bigl(-\Phi_\Gamma(E;\delta)\bigr)\phi(\vu(E)) 
\;.\]
The Bayesian estimate of the QoI $\phi$, given noisy data $\delta$, takes the form
\[\mathbb{E}^{\bpi^\delta}[\phi]
= 
Z^\prime/Z, \;\;
\quad 
Z^\prime:= I(\Psi) = 
\int_{U} \! \Psi(\bsy) \,\bpi_0(\dd\bsy)
\;.\]

In the following, we propose an approximation strategy to compute the high dimensional integrals $Z, Z'$ using a higher-order QMC method. 
The truncated parametric map applied to the truncated forward problem is defined by
\[
\Psi(\bsy_s) = \Theta(\bsy_s) \phi(\tilde \vu(E_s)).
\]
Thus,  the truncated parametric map applied to the finite element approximation to the forward problem is defined by
\[
\Psi_h(\bsy_s) = \Theta(\bsy_s) \phi(\tilde \vu_h(E_s)).
\]

\begin{theorem}\label{thm-Zprime est}
Assume that  \eqref{ass A11}--\eqref{ass A2} and \eqref{a priori H2} are satisfied. In addition, the QoI mapping $\phi$ is assumed to be a bounded linear functional. Let $Z^\prime_{N,s,h} = Q_{N,s}(\Psi_h)$, and $Z^\prime =I(\Psi)$,
then we have the following error bound
\[
|Z^{\prime}_{N,s,h} - Z^\prime|
\le C (h^{\theta+1} + N^{-1/p} + 
s^{1-1/p}),\quad{\rm for}~~1\le \theta\le r.
\]    
\end{theorem}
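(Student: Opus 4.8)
The plan is to split the error by the triangle inequality into a finite element part, a QMC quadrature part, and a dimension truncation part, and to bound each using the results already established. Writing $\Psi(\cdot_s)$ for the truncated map $\bsy_s \mapsto \Theta(\bsy_s)\phi(\tilde\vu(E_s))$ and recalling $\Psi_h(\bsy_s) = \Theta(\bsy_s)\phi(\tilde\vu_h(E_s))$, I decompose
\[
|Z'_{N,s,h} - Z'|
\le
\big|Q_{N,s}(\Psi_h) - Q_{N,s}(\Psi(\cdot_s))\big|
+ \big|Q_{N,s}(\Psi(\cdot_s)) - I_s(\Psi(\cdot_s))\big|
+ \big|I_s(\Psi(\cdot_s)) - I(\Psi)\big|.
\]
Two elementary facts are used throughout. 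Since $\Gamma$ is a covariance, $\Phi_\Gamma \ge 0$, so $0 < \Theta \le 1$ pointwise; and by well-posedness the forward solution is bounded uniformly in $\bsy$, $\|\vu(\cdot,\bsy)\|_\vV \le C\|\vf\|_{\vV^*}$, whence $|\phi(\vu)| \le C\|\phi\|\,\|\vf\|_{\vV^*}$ uniformly.

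For the finite element term I use that $Q_{N,s}$ averages point values and that the factor $\Theta(\bsy_s)$ is common to $\Psi_h$ and $\Psi(\cdot_s)$; since $|\Theta| \le 1$,
\[
\big|Q_{N,s}(\Psi_h) - Q_{N,s}(\Psi(\cdot_s))\big|
\le \sup_{\bsy_s \in U}\big|\phi(\tilde\vu(E_s)) - \phi(\tilde\vu_h(E_s))\big|
\le C\,h^{\theta+1}\|\vf\|_{{\bf H}^{\theta-1}}\|\phi\|,
\]
the last step being Theorem \ref{Convergence theorem} applied to the bounded linear functional $\phi$ on ${\bf L}^2(\Omega)$. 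For the truncation term, $\Psi(\bsy_s)$ is independent of the coordinates $y_j$ with $j > s$ and $\bpi_0$ is a product measure, so $I_s(\Psi(\cdot_s)) = \int_U \Psi(\bsy_s)\,\bpi_0(\dd\bsy)$ and the term is at most $\sup_{\bsy \in U}|\Psi(\bsy) - \Psi(\bsy_s)|$. Splitting $\Psi(\bsy) - \Psi(\bsy_s) = \big(\Theta(\bsy) - \Theta(\bsy_s)\big)\phi(\vu(E)) + \Theta(\bsy_s)\big(\phi(\vu(E)) - \phi(\tilde\vu(E_s))\big)$ and using $|e^{-a} - e^{-b}| \le |a - b|$ for $a,b \ge 0$, the first difference is controlled by $|\Phi_\Gamma(E;\delta) - \Phi_\Gamma(E_s;\delta)|$, which by the (locally Lipschitz) quadratic dependence of $\Phi_\Gamma$ on $\calO(\vu)$ reduces to $\|\calO(\vu) - \calO(\tilde\vu(E_s))\|$; both this and the second difference are then bounded by Theorem \ref{convergence calL u-us} applied componentwise, giving $C\,s^{1-1/p}$.

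The main obstacle is the QMC term, for which I must verify that $F := \Psi(\cdot_s)$ satisfies the mixed-derivative hypothesis of Theorem \ref{thm:quadrature}, namely $|\partial^\balpha_\bsy \Psi(\bsy_s)| \le c\,|\balpha|!\,\bchi_s^\balpha$ for all $\balpha \in \{0,\ldots,\alpha\}^s$, with $\chi_j$ a fixed multiple of $\|\psi_j\|_{L^\infty(\Omega)}$ so that $\chi_j$ is nonincreasing and $\sum_j \chi_j^p < \infty$ by \eqref{ass A11}. This is done in three steps. First, because $\mu$ and $\lambda$, hence the bilinear form $\calB(\bsy;\cdot,\cdot)$, depend affinely on $\bsy$, differentiating the weak form \eqref{para weak truncated} and inducting on $|\balpha|$ gives the classical parametric-regularity bound $\|\partial^\balpha_\bsy \tilde\vu\|_\vV \le C\,|\balpha|!\,\bbeta^\balpha\|\vf\|_{\vV^*}$ with $\bbeta_j = C\|\psi_j\|_{L^\infty(\Omega)}/E_{\min}$; since $\phi$ and $\calO$ are bounded and linear, $\partial^\balpha_\bsy \phi(\tilde\vu) = \phi(\partial^\balpha_\bsy \tilde\vu)$ and $\partial^\balpha_\bsy \calO(\vu) = \calO(\partial^\balpha_\bsy \vu)$ inherit the same bound. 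Second, writing $R(\bsy) = \delta - \calO(\vu) \in \R^{2K}$ and $\Phi_\Gamma = \tfrac12 R^\top \Gamma^{-1} R$, the Leibniz rule expresses $\partial^\balpha_\bsy \Phi_\Gamma$ as a sum of products of two derivatives of $R$, whence $|\partial^\balpha_\bsy \Phi_\Gamma| \le C\,|\balpha|!\,\tilde\bbeta^\balpha$ for a suitable $\tilde\bbeta$. Third --- the delicate point --- since $\Theta = \exp(-\Phi_\Gamma)$ is the composition of an entire function with the parametrically analytic map $\bsy \mapsto \Phi_\Gamma$, a Fa\`a di Bruno (equivalently, holomorphy) argument shows that the factorial-times-geometric structure survives the exponential, and a final Leibniz expansion of the product $\Psi = \Theta\cdot\phi(\vu)$ preserves it, yielding the required bound with $\bchi$ a constant multiple of $(\|\psi_j\|_{L^\infty(\Omega)})_{j\ge1}$. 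With the interlacing factor $\alpha = \lfloor 1/p\rfloor + 1$, Theorem \ref{thm:quadrature} then gives $|Q_{N,s}(\Psi(\cdot_s)) - I_s(\Psi(\cdot_s))| \le C\,N^{-1/p}$. Combining the three estimates yields the claimed bound.
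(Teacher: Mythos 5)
Your proof is correct and uses the same skeleton as the paper's own proof: the identical three-term triangle-inequality splitting into a finite element part, a QMC part, and a dimension-truncation part, bounded respectively by Theorem~\ref{Convergence theorem}, Theorem~\ref{thm:quadrature} and Theorem~\ref{convergence calL u-us}. In fact, your write-up is more complete than the paper's in two places. First, for the truncation term the paper bounds $|I_s(\Psi)-I(\Psi)|$ by $C\max_{\bsy\in U}|\Theta(\bsy)|\,|\phi(\vu)-\phi(\tilde\vu)|$, which silently drops the fact that the likelihood factor itself changes under truncation, i.e.\ $\Theta(\bsy)\neq\Theta(\bsy_s)$; your splitting $\Psi(\bsy)-\Psi(\bsy_s)=(\Theta(\bsy)-\Theta(\bsy_s))\phi(\vu)+\Theta(\bsy_s)\bigl(\phi(\vu)-\phi(\tilde\vu)\bigr)$, combined with $|e^{-a}-e^{-b}|\le|a-b|$ for $a,b\ge 0$ and Theorem~\ref{convergence calL u-us} applied componentwise to $\calO$, supplies the missing piece while still giving the rate $s^{1-1/p}$. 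Second, the paper invokes Theorem~\ref{thm:quadrature} for $F=\Psi$ without checking its hypothesis $|\partial^{\balpha}_{\bsy}\Psi(\bsy)|\le c\,|\balpha|!\,\bchi_s^{\balpha}$; your three-stage verification (affine-parametric regularity of $\tilde\vu$, Leibniz for the quadratic potential, Fa\`a di Bruno/holomorphy for the exponential, then a final product rule) is precisely the missing ingredient, and it is essentially what the reference \cite{DickGantnerLeGiaSchwab2019}, cited by the paper in a related context, carries out in detail. The one caveat is that this stage of your argument remains a sketch: Leibniz and Fa\`a di Bruno do not preserve the bound $c\,|\balpha|!\,\bchi_s^{\balpha}$ verbatim but enlarge it (e.g.\ $\sum_{\boldsymbol{\nu}\le\balpha}\binom{\balpha}{\boldsymbol{\nu}}|\boldsymbol{\nu}|!\,|\balpha-\boldsymbol{\nu}|!\le(|\balpha|+1)!$), so one must absorb factors such as $|\balpha|+1\le 2^{|\balpha|}$ into a dilated sequence like $2\bchi$, which leaves $p$, and hence the rate $N^{-1/p}$, unchanged; making that bookkeeping explicit would turn your outline into a fully rigorous proof, and even as it stands it is more complete than the argument given in the paper.
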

\begin{proof}
Let $Z^\prime_{N,s} = Q_{N,s}(\Psi)$ and $Z'_{s}=I_s(\Psi)$.
Then, by using the triangle inequality, we have
\[
|Z^{\prime}_{N,s,h} - Z^\prime|
\le |Z^{\prime}_{N,s,h}-Z^\prime_{N,s}|+|Z^{\prime}_{N,s} - Z^\prime_s| + |Z^\prime_s - Z^\prime|.
\]
We now estimate term by term. An application of Theorem~\ref{Convergence theorem} yields 
\begin{align*}
|Z^\prime_{N,s,h} - Z^\prime_{N,s}|
  &= | Q_{N,s}(\Psi_h) - Q_{N,s}(\Psi)|
 \le \|\Psi_h - \Psi\|_\infty
 \\
 &\le \|\Theta\|_\infty
 |\phi(\tilde \vu)  - \phi(\tilde \vu_h)|
 \le Ch^{\theta+1}\|\Theta\|_\infty
 \|\phi\| \|\vf\|_{{\bf H}^{\theta-1}} 
\end{align*}
To estimate the quadrature error, we use  Theorem~\ref{thm:quadrature} and obtain 
\begin{align*}
 |Z^\prime_{N,s} - Z^\prime_{s}|
 & = | Q_{N,s}(\Psi) - I_s(\Psi)| \le CN^{-1/p}.
\end{align*}
The truncation error can be estimated using Theorem~\ref{convergence calL u-us} as 
\[|Z^\prime_{s}   - Z^\prime|=
  |I_s(\Psi) - I(\Psi)|
  \le C \max_{\bsy \in U} |\Theta(\bsy)| |\phi(\vu)-\phi(\tilde \vu)|
  \le C s^{1-1/p}\,.\]
Combining all error estimates, we arrive at the result of the theorem.
\end{proof}

In Theorem~\ref{thm-Zprime est} we assume that the normalizing constant $Z$ is known exactly, or at least, can be approximated sufficiently accurately as to not influence the error. For the case where the error of approximating $Z$ is also included in the error analysis see \cite{DickGantnerLeGiaSchwab2019}.

\section{Numerical experiments}\label{Numerical}
We choose $\Omega = [0,1]^2$ to be a unit square. Assume that the Young's modulus depends on the parameters $\bsy = \{y_j\}_{j=1}^s$ via the finite expansion (so $\vu=\tilde \vu$)
\[
E (\bsx,\bsy) = 1 + 
\sum_{j=1}^s \frac{y_j}{j^2}
\sin(2\pi jx_1)\sin(2\pi(j+1)x_2), \quad \bsx = (x_1,x_2) \in \Omega.
\]
As prior measure $\pi_0$ on $(-1/2,1/2)^s$, we use the uniform product measure $\pi_0(\dd u) = \otimes_{j=1}^s  \dd y_j$.

We set the Poisson ratio $\nu = 0.4$ and $\vf (\bsx) = (2x_1 + 10,~x_2 - 3).$ The forward map $\calG: W^{\theta,\infty}(\Omega) \rightarrow \vV$, defined by $\calG(E) = \vu$, is approximated by the quadratic finite element solver $\calG_h(E) = \tilde \vu_h$. We choose $K=10$ observation points with Cartesian coordinates; $\bsx_k = 
(0.5, 10^{-3}+ k(10^{-1}-10^{-4}))$ for $k=0,1,\ldots,9.$ The noisy observations are point evaluations of the solution
\[
o_k = \vu(\bsx_k, \bsy^*) + \eta_k, \quad k=0,\ldots,9.
\]
with $\eta_k \sim \calN(0,\Gamma)$,
where $\calN(0,\Gamma)$ denote the multivariate normal distribution with
covariance matrix $\Gamma$. In the experiments, we choose $\Gamma = \sigma {\bf I}$, with $\sigma=0.1$ and ${\bf I}$ being the $2K\times 2K$ identity matrix.
To generate the observations, the truth $\bsy^*$ was chosen as a sample in the prior. In Figure~\ref{fig:post-density-s-2}, we set $s=2$ and plot the un-normalized posterior density, that is, the function $\exp(-\Phi_{\Gamma}(E;\delta))$ over the uniform grid $(-1/2,1/2)^{2}$. The linear functional $\phi$ is defined by
\[
\phi(\vu) = \int_{\Omega} (u_1 + u_2) d\bsx. 
\]
We then set $s=64$, generated another set of measurements with noise and computed the approximate integrals $Z'_N/Z$ with different values of $N$. We used higher-order QMC point sets based on those presented in \cite{GS14_575}.
Table \ref{tab:ErrorQn-s-64} shows the errors of $Z'_N/Z$ compared to the reference value (computed with $N=2^{15}=32768$ quadrature points) 
 with numerical expected order of convergence (EOC). 
\begin{figure}
    \centering
    \includegraphics[width=0.8\linewidth]{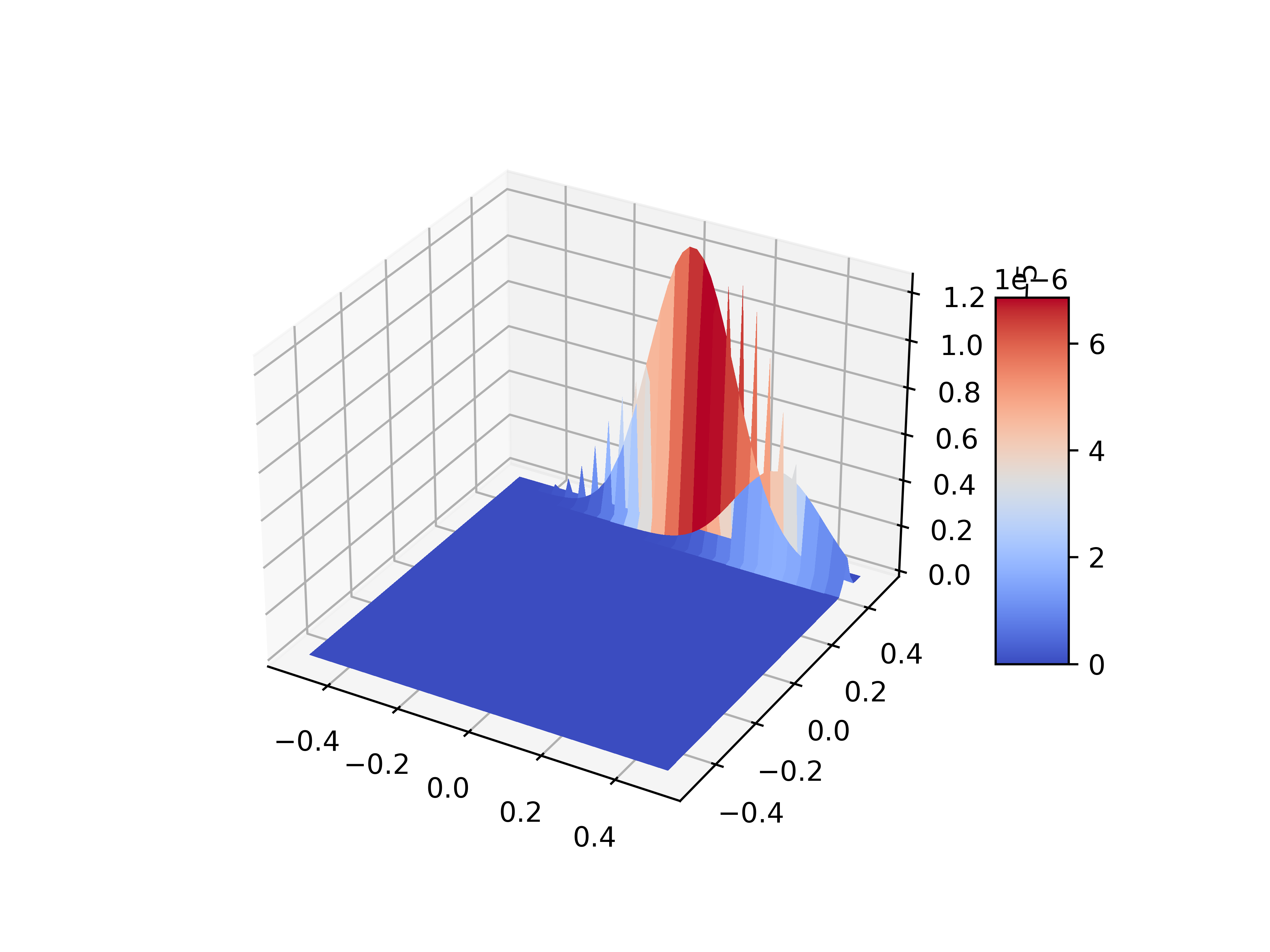}
    \caption{Un-normalised posterior density with $s=2$}
    \label{fig:post-density-s-2}
\end{figure}

\begin{table}[]
\centering
\begin{tabular}{|c|c|c|}\hline
$N$ & $\text{err}_N$  & EOC \\
\hline        
256  & 0.8758 & \\
1024 & 0.3500 &  0.6617 \\
2048 & 0.1193 &  1.5527 \\
4096 & 0.0176 &  2.7612 \\
\hline
 \end{tabular}
    \caption{Errors of $Z^{'}_N/Z$ compared to the reference value.}
    \label{tab:ErrorQn-s-64}
\end{table}
\paragraph{Acknowledgements}
The authors gratefully acknowledge the support of the Australian Research Council, in particular, ARC Discovery Grant DP220101811.
\ifx\printbibliography\undefined
    \bibliographystyle{plain}
    \bibliography{ctac}
\else\printbibliography\fi
\end{document}